 \newtheorem{thm}{Theorem}[section]
 \newtheorem{cor}[thm]{Corollary}
 \newtheorem{lem}[thm]{Lemma}
 \newtheorem{prop}[thm]{Proposition}
 \theoremstyle{definition}
 \newtheorem{defn}[thm]{Definition}
 \newtheorem{rem}[thm]{Remark}
 \newtheorem{ex}[thm]{Example}
 \numberwithin{equation}{section}
\numberwithin{equation}{section}
\newcommand{\cl}[1]{\mathcal{#1}} 
\newcommand{\bb}[1]{\mathbb{#1}}
\newcommand{\nor}[1]{\left\Vert #1\right\Vert}
\DeclareMathOperator*{\tens}{\otimes}
\begin{document}

\title[Similarities for the maximal tensor product of certain C*-algebras]
 {Similarities for the maximal tensor product of certain C*-algebras}
\vspace{3em}

\author{E. Papapetros}
\address{University of the Aegean\\Faculty of Sciences\\ Department of Mathematics\\832 00 Karlovasi, Samos, Greece}
\email{e.papapetros@aegean.gr}

\subjclass{Primary: 47L30; Secondary: 46L05, 46L10, 47L55}

\keywords{C*-algebras, von Neumann algebras, similarity problem}

\begin{abstract}
We prove that if the unital $C^*$-algebras $\cl A$ and $\cl B$ satisfy Kadison's similarity property and the length $L=L\left(\cl A\tens \limits_{max}\cl B\right)$ of their maximal tensor product is finite, then $\,\cl A\tens\limits_{max}\cl B$ satisfies Kadison's similarity property with similarity length $\ell\left(\cl A\tens\limits_{max}\cl B\right)\leq L \max\left\{\ell(\cl A),\,\ell(\cl B)\right\}.$
\end{abstract}

\maketitle

\section{Introduction}
Given a $C^*$-algebra $ \mathcal A$ and a bounded homomorphism $\rho\colon \mathcal A\rightarrow \mathcal B(H),$ where $H$ is a Hilbert space, does there exist 
an invertible operator $S\in \mathcal B(H)$ such that $\pi (\cdot)=S^{-1}\,\rho(\cdot)\,S$ defines a $*$-homomorphism of $\cl A$? If this happens, we say that $\rho$ is similar to $\pi.$ We say 
that $\mathcal A$ satisfies the similarity property ((SP)) if every bounded homomorphism $\rho\colon \mathcal A\rightarrow \mathcal B(H),$ where $H$ is a Hilbert space, 
is similar to a $*$-homomorphism.

The above property was introduced by Kadison in \cite{kad} where he conjectured that all $C^*$-algebras satisfy the similarity property. Kadison's similarity problem is equivalent to a number of questions, including the problem of hyperreflexivity of all von Neumann algebras, the derivation problem, the invariant operator range problem and the problem of finite length of $C^*$-algebras \cite{pisier1}. The author and Eleftherakis in their joint work \cite{ele-pap}, proved that Kadison's similarity problem is equivalent to the following property, which they named {\bf (CHH):}

{\em Every hyperreflexive separably acting von Neumann algebra is completely hyperreflexive}.

Recently, in \cite{pap} the author proved that Kadison's similarity problem is also equivalent to the following property {\bf (EP):}

{\em Every separably acting von Neumann algebra with a cyclic vector is hyperreflexive}.

We say that a von Neumann algebra $\cl M$ satisfies the weak similarity property ((WSP)) if every $\rm{w}^*$-continuous, unital and bounded homomorphism $u\colon \cl M\rightarrow \cl B(H),$ where $H$ is a Hilbert space, is similar to a $*$-homomorphism. 
The connection between (SP) and (WSP) is given by the following lemma.

\begin{lem}\cite{ele-pap}
    \label{SP-WSP}
   Let $\cl A$ be a unital $C^*$-algebra. The algebra $\cl A$ satisfies (SP) if and only if its second dual $\cl A^{**}$ satisfies (WSP). 
\end{lem}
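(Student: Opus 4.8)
The plan is to move bounded homomorphisms back and forth across the canonical w*-dense inclusion $\cl A\hookrightarrow\cl A^{**}$, using on the $C^*$-side the hypothesis (SP) together with the universal property of the enveloping von Neumann algebra, and on the von Neumann side the hypothesis (WSP) together with a w*-continuous bidual extension of $\rho$.

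For the implication (SP) $\Rightarrow$ (WSP): given a w*-continuous, unital, bounded homomorphism $u\colon\cl A^{**}\to\cl B(H)$, I would restrict it to $\rho:=u|_{\cl A}$, a bounded unital homomorphism of $\cl A$. By (SP) there is an invertible $S\in\cl B(H)$ with $\pi:=S^{-1}\rho(\cdot)S$ a unital $*$-homomorphism $\cl A\to\cl B(H)$, i.e.\ a unital $*$-representation. The universal property of $\cl A^{**}$ then extends $\pi$ to a normal unital $*$-homomorphism $\hat\pi\colon\cl A^{**}\to\cl B(H)$, and $v:=S\,\hat\pi(\cdot)\,S^{-1}$ is w*-continuous, unital, bounded, and agrees with $u$ on $\cl A$; since $\cl A$ is w*-dense in $\cl A^{**}$ and both $u,v$ are w*-continuous (with $\cl B(H)$ Hausdorff in the w*-topology), $u=v$. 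Hence $u=S\,\hat\pi(\cdot)\,S^{-1}$ is similar to the $*$-homomorphism $\hat\pi$, so $\cl A^{**}$ has (WSP).

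For the converse (WSP) $\Rightarrow$ (SP): given a bounded homomorphism $\rho\colon\cl A\to\cl B(H)$, I would first reduce to the unital case by conjugating the bounded idempotent $\rho(1)$ to a projection $p$ and compressing into $p\cl B(H)p=\cl B(pH)$, which reduces matters to unital $\rho$. The central step is to extend such a $\rho$ to a w*-continuous unital homomorphism of $\cl A^{**}$: using that $\cl B(H)=(\cl B(H)_*)^{*}$, set $\hat\rho:=\bigl(\rho^{*}|_{\cl B(H)_*}\bigr)^{*}\colon\cl A^{**}\to(\cl B(H)_*)^{*}=\cl B(H)$. Being an adjoint of a bounded map, $\hat\rho$ is automatically w*-continuous, it extends $\rho$, satisfies $\|\hat\rho\|=\|\rho\|$, and sends $1$ to $1$; its multiplicativity is then checked by the usual separate-w*-continuity and density argument, verifying $\hat\rho(xy)=\hat\rho(x)\hat\rho(y)$ first on $\cl A\times\cl A$, then (fixing $x\in\cl A$) on $\cl A\times\cl A^{**}$, then (fixing $y\in\cl A^{**}$) on $\cl A^{**}\times\cl A^{**}$. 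Now (WSP) supplies an invertible $S\in\cl B(H)$ with $S^{-1}\hat\rho(\cdot)S$ a $*$-homomorphism of $\cl A^{**}$; restricting to $\cl A$ shows $S^{-1}\rho(\cdot)S$ is a $*$-homomorphism, so $\cl A$ has (SP).

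The one point that is not purely formal is the multiplicativity of the bidual extension $\hat\rho$ (equivalently, the fact that it is the first Arens transpose of $\rho$, which is a homomorphism because $C^*$-algebras are Arens regular so that $\cl B(H)^{**}$ carries a single weak$^*$ algebra structure restricting to that of $\cl B(H)$); the remaining ingredients — w*-density of $\cl A$ in $\cl A^{**}$, normal extension of $*$-representations to $\cl A^{**}$, separate w*-continuity of multiplication, and similarity of a bounded idempotent to a projection — are standard.
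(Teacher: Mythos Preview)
The paper does not supply a proof of this lemma; it is simply quoted from \cite{ele-pap}. Your argument is correct and is the standard one: pass across the w*-dense inclusion $\cl A\hookrightarrow\cl A^{**}$, using in the (SP)$\Rightarrow$(WSP) direction the normal extension of $*$-representations to the enveloping von Neumann algebra, and in the (WSP)$\Rightarrow$(SP) direction the w*-continuous extension $\hat\rho=(\rho^*|_{\cl B(H)_*})^*$, whose multiplicativity follows exactly as you indicate from separate w*-continuity of multiplication in $\cl A^{**}$ and $\cl B(H)$ together with w*-density of $\cl A$.
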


Pisier in \cite{pisier3} proved that a $C^*$-algebra $\cl A$ satisfies (SP) if and only if its length $\ell(\cl A)$ is finite. 

\begin{defn}\cite{pisier3}
   The length of a $C^*$-algebra $\cl A$ is the smallest $\ell(\cl A)\geq 1$ for which there is a constant $C=C(\cl A)>0$ such that every bounded homomorphism $\pi\colon \cl A\to \cl B(H),$ where $H$ is a Hilbert space, satisfies $$||\pi||_{cb}\leq C\,||\pi||^{\ell(\cl A)}.$$  
\end{defn}
We list below the known cases of $C^*$-algebras with finite length.
\begin{enumerate}[(i)]
    \item If $\cl A$ is a nuclear $C^*$-algebra, then $\ell(\cl A)\leq 2$ \cite{pn}.
    \item If $\cl A$ is a $C^*$-algebra with no tracial states, then $\ell(\cl A)\leq 3$ \cite{pis}.
    \item If $\cl M$ is a type $II_1$ factor with property $\Gamma,$ then $\ell(\cl M)=3$ \cite{chris1}.
    \item For any $C^*$-algebra $\cl A$ it holds that $\ell\left(\cl A \tens \limits_{min} \cl K\right)\leq 3,$ where $\cl K$ is the $C^*$-algebra of compact operators on $\ell^2(\bb N)$ \cite{haag, pis}.
\end{enumerate}
It is not known whether there exists a unital $C^*$-algebra $\cl A$ with $\ell(\cl A)>3.$ An affirmative answer to Kadison's similarity problem would imply the existence of a positive integer $\ell_0$ such that $\ell(\cl A)\leq \ell_0$ for every $C^*$-algebra $\cl A$ \cite[Proposition 15]{pis}.

In \cite{ele-pap} the author and Eleftherakis proved that the normal spatial tensor product of an injective von Neumann algebra and a von Neumann algebra satisfying (WSP), satisfies (WSP). By combining Lemma \ref{SP-WSP} and the fact that a unital $C^*$-algebra is nuclear if and only if its second dual is injective \cite[Theorem 6.4]{ef-lan}, the following question occurs naturally: \\
   (1) {\em Does the minimal tensor product of a nuclear unital $C^*$-algebra with a unital $C^*$-algebra satisfying (SP), also satisfy (SP)} ?
 
 More generally :\\   
(2) {\em Does the maximal tensor product of two unital $C^*$-algebras satisfying (SP), also satisfy (SP)} ?

In Section 2, we set the stage by presenting definitions and lemmas that are useful for the next section. Moreover, we prove that if there exist unital $C^*$-algebras $\cl A$ and $\cl B$ such that $d_1(\cl A,\,\cl B)<\infty$ whereas $d(\cl A,\,\cl B)=\infty,$ as these quantities defined by Pisier in \cite{pairs}, then the $C^*$-algebra $\cl A\tens\limits_{max}\cl B$ does not satisfy (SP). 

In Section 3 we prove the main theorem of the paper. Namely, in Theorem \ref{main} we prove that if the unital $C^*$-algebras $\cl A$ and $\cl B$ satisfy (SP) and the length $L$ of their maximal tensor product is finite, then $\cl A\tens\limits_{max}\cl B$ satisfies (SP) and its similarity length $\ell\left(\cl A\tens\limits_{max}\cl B\right)$ is less than $L \max\left\{\ell(\cl A),\,\ell(\cl B)\right\}.$ This fact answers partially question (2). In \cite{pairs}, Pisier proved that if $\cl A$ is a unital and nuclear $C^*$-algebra and $\cl B$ is an arbitrary unital $C^*$-algebra, then $L\left(\cl A\tens\limits_{max}\cl B\right)\leq 3,$ and so question (1) above has an affirmative answer in the unital case. We generalise this result to the non-unital case as well.

\vspace{1em} 
In what follows, $\cl B(H)$ is the $C^*$-algebra of all linear and bounded operators from the Hilbert space $H$ to itself. For two $C^*$-algebras $\cl A$ and $\cl B,$ $\,\cl A\tens \limits_{min} \cl B$ denotes their minimal (spatial) tensor product whereas $\cl A\tens\limits_{max} \cl B$ denotes their maximal tensor product. For further details see \cite{bm, pisbook}. 

We remind the reader the notions of a completely bounded map and a nuclear $C^*$-algebra that will be used in the following sections.

\begin{defn}
  Let $u\colon \mathcal X\to \mathcal Y$ be a linear map between operator spaces. For each $n\in\mathbb N$ consider the matrix amplification $$u_n\colon M_n(\mathcal X)\to M_n(\mathcal Y),\,\, u_n((X_{i,j}))=(u(X_{i,j})).$$
 
If $\,\sup_{n} ||u_n||<\infty, $  then we say that $u$ is completely bounded with cb norm $$||u||_{cb}:=\sup_{n} ||u_n||.$$   
\end{defn}

\begin{defn}
    A $C^*$-algebra $\cl A$ is called nuclear if $\cl A\tens \limits_{min}\cl B\cong \cl A\tens \limits_{max} \cl B$ for every $C^*$-algebra $\cl B.$
\end{defn}
For further details on nuclear $C^*$-algebras and their properties we refer the reader to \cite{bo, ef-choi, ef-lan, la}.

\section{Preliminaries}
We begin this section with the definition of the length for a pair of $C^*$-subalgebras \cite{pairs}.

Let $\cl A$ and $\cl B$ be $C^*$-subalgebras of a $C^*$-algebra $\cl Z.$ For every $n\in\bb N$ we will view $M_n(\cl A)$ and $M_n(\cl B)$ as $C^*$-subalgebras of $M_n(\cl Z),$ so that if $x_1\in M_n(\cl A)$ and $x_2\in M_n(\cl B),$ then the product $x_1 x_2$ belongs to $M_n(\cl Z),$ and similarly for a product of rectangular matrices. 

Let $d\geq 1$ be an integer. We say that $L(\cl Z;\,\cl A,\,\cl B)\leq d$ if there is a constant $C$ such that for any $n\in\bb N$ and any $x\in M_n(\cl Z)$ with $||x||_{M_n(\cl Z)}<1$ and for any $\epsilon>0,$ there exists $N\in\bb N$ for which we can find matrices $x_1,\,x_2,\,...,x_d$ and $y_1,\,y_2,\,...,y_d,$ with entries either all in $\cl A$ or all in $\cl B,$ where $x_1,\,x_2,\,...,x_d$ are of sizes respectively, $n\times N,\,N\times N,\,...,N\times N$ and $N\times n,$ and similarly for $y_1,\,y_2,\,...,y_d,$ satisfying $$\prod_{j=1}^{d}||x_j||+\prod_{j=1}^{d}||y_j||<C$$ and finally $$\nor{x-\prod_{j=1}^{d}x_j-\prod_{j=1}^{d}y_j}<\epsilon.$$

In what follows, let $\cl A$ and $\cl B$ be unital $C^*$-algebras.  We identify $\cl A$ with $\cl A\,\otimes 1_{\cl B}$ and $\cl B$ with $1_{\cl A}\,\otimes \cl B$ and we view them as unital $C^*$-subalgebras of the unital $C^*$-algebra $\cl Z=\cl A\tens\limits_{max}\cl B.$

\begin{defn}\cite{pairs}
    We define the length of $\cl A\tens\limits_{max}\cl B$ to be $$L\left(\cl A\tens\limits_{max}\cl B\right):=L\left(\cl A\tens\limits_{max}\cl B;\, \cl A\,\otimes 1_{\cl B},\,1_{\cl A}\,\otimes \cl B\right).$$
\end{defn}

By Gelfand-Naimark theorem, there exist Hilbert spaces $H_0$ and $K_0$ as well as faithful unital $*$-representations $\pi\colon \cl A\to \cl B(H_0)$ and $\rho\colon \cl B\to \cl B(K_0).$ Let $H=H_0\otimes_{2} K_0$ be the Hilbert tensor product of $H_0$ and $K_0.$ The maps $u$ and $v$ defined by $$u\colon \cl A\to \cl B(H),\,\,u(a)=\pi(a)\otimes I_{K_0}$$ $$v\colon \cl B\to \cl B(H),\,\,v(b)=I_{H_0}\otimes \rho(b)$$ are unital $*$-homomorphisms (so they are unital and completely bounded) with commuting ranges since for all $a\in\cl A$ and $b
\in\cl B$ we have $$u(a) v(b)=\pi(a)\otimes \rho(b)=v(b) u(a).$$ 
Therefore, the following definitions are meaningful.

\begin{defn}\cite{pairs}
   We say that $\,d_1(\cl A,\,\cl B)\leq d$ if there is a constant $M>0$ such that for any pair $u\colon \cl A\to \cl B(H),\,v\colon \cl B\to \cl B(H)$ of completely bounded unital homomorphisms with commuting ranges, the homomorphism $$u v\colon \cl A\otimes \cl B\to \cl B(H),\,\,a\otimes b\mapsto u(a) v(b)$$ is bounded on $\cl A\tens\limits_{max}\cl B$ with $||u v||\leq M \max\left\{||u||_{cb},\,||v||_{cb}\right\}^d.$ 
\end{defn}

\begin{defn}\cite{pairs}
     We say that $d(\cl A,\,\cl B)\leq d$ if there is a constant $M>0$ such that for any pair $u\colon \cl A\to \cl B(H),\,v\colon \cl B\to \cl B(H)$ of completely bounded unital homomorphisms with commuting ranges, the homomorphism $u v$ is completely bounded on $\cl A\tens\limits_{max}\cl B$ with $||u v||_{cb}\leq M \max\left\{||u||_{cb},\,||v||_{cb}\right\}^d.$  
\end{defn}

The number $d(\cl A,\,\cl B)$ (resp. $\,d_1(\cl A,\,\cl B)$) is defined as the infimum of the numbers $d\geq 1$ such that the above property holds. We set $d(\cl A,\,\cl B)=\infty$ (resp. $\,d_1(\cl A,\,\cl B)=\infty$) if there exist a Hilbert space $H_0$ as well as completely bounded unital homomorphisms $u_0\colon \cl A\to \cl B(H_0),\,v_0\colon \cl B\to \cl B(H_0)$ with commuting ranges such that the homomorphism $u_0 v_0$ is not completely bounded (resp. not bounded) on $\cl A\tens\limits_{max}\cl B.$ Clearly, $d_1(\cl A,\,\cl B)=d_1(\cl B,\,\cl A)$ and $d(\cl A,\,\cl B)=d(\cl B,\,\cl A).$

\begin{prop}
\label{ineq}
$d_1(\cl A,\,\cl B)\leq d(\cl A,\,\cl B)$.
\end{prop}

\begin{proof}
  Obviously, $d_1(\cl A,\,\cl B)=\infty$ implies $d(\cl A,\,\cl B)=\infty.$ Assume that $d(\cl A,\,\cl B)=d<\infty.$ There exists $M>0$ such that for any pair $u\colon \cl A\to \cl B(H),\,v\colon\cl B\to \cl B(H)$ of completely bounded unital homomorphisms with commuting ranges, the homomorphism $u v$ is completely bounded on $\cl A\tens\limits_{max}\cl B$ with $$||u v||_{cb}\leq M\,\max\left\{||u||_{cb},||v||_{cb}\right\}^d,$$ which implies that for the same constant $M>0$ and for any pair $(u,v)$ as above, the homomorphism $u v$ is bounded with norm $$||u v||\leq ||u v||_{cb}\leq M \max\left\{||u||_{cb},||v||_{cb}\right\}^d.$$ Therefore, $d_1(\cl A,\,\cl B)\leq d=d(\cl A,\,\cl B).$
\end{proof}

In \cite[Remark 25]{pairs}, Pisier proved that under the hypothesis that for any pair $u\colon \cl A\to \cl B(H),\,v\colon \cl B\to \cl B(H)$ of completely bounded unital homomorphisms with commuting ranges, the homomorphism $u v$ is completely bounded on $\cl A\tens\limits_{max}\cl B,$ then $d(\cl A,\,\cl B)<\infty$ and, moreover, $d(\cl A,\,\cl B)=L\left(\cl A\tens\limits_{max}\cl B\right).$

Assuming now that for any pair $u\colon \cl A\to \cl B(H),\,v\colon \cl B\to \cl B(H)$ of completely bounded unital homomorphisms with commuting ranges, the homomorphism $u v$ is bounded on $\cl A\tens\limits_{max}\cl B,$ then Proposition \ref{ineq} and Lemma 22 in \cite{pairs} imply that $$d_1(\cl A,\,\cl B)\leq d(\cl A,\,\cl B)\leq d,$$ where $d$ is the minimun integer having the property $L\left(\cl A\tens\limits_{max}\cl B\right)\leq d.$
However, according to \cite[Remark 25]{pairs}, if we only assume that $L\left(\cl A\tens\limits_{max}\cl B\right)<\infty,$ we cannot verify in full generality that $d(\cl A,\,\cl B)\leq L\left(\cl A\tens\limits_{max}\cl B\right).$

\begin{prop}
    Let $d_1=d_1(\cl A,\cl B)<\infty.$ If the $C^*$-algebra $\cl A\tens\limits_{max}\cl B$ satisfies (SP), then $$L\left(\cl A\tens\limits_{max}\cl B\right)=d(\cl A,\cl B)\leq \ell\,d_1(\cl A,\,\cl B),$$ where $\ell$ is the finite length of the $C^*$-algebra $\cl A\tens\limits_{max}\cl B.$
\end{prop}

\begin{proof}
    Let $u\colon \cl A\to \cl B(H),\,v\colon \cl B\to \cl B(H)$ be completely bounded unital homomorphisms with commuting ranges. According to our hypothesis, the homomorphism $u v$ is bounded on $\cl A\tens\limits_{max}\cl B$ with norm \begin{equation} \label{d_1}
        ||u v||\leq M \max\left\{||u||_{cb},||v||_{cb}\right\}^{d_1}.
    \end{equation}
 Since $\cl A\tens\limits_{max}\cl B$ satisfies (SP) it follows that $u v$ is similar to a $*$-homomorphism, which implies that $u v$ is completely bounded on $\cl A\tens\limits_{max}\cl B$ and from \cite{pisier3} its cb norm satisfies the inequality \begin{equation}\label{length1}
     ||u v||_{cb}\leq \tilde{M} ||u v||^{\ell}.
 \end{equation}
By combining (\ref{d_1}) and (\ref{length1}) we have $$||u v||_{cb}\leq \tilde{M} M^{\ell} \max\left\{||u||_{cb},||v||_{cb}\right\}^{\ell d_1}.$$
Therefore, $d(\cl A,\,\cl B)\leq \ell d_1<\infty$ and by Theorem 24 in \cite{pairs}, we deduce that $L\left(\cl A\tens\limits_{max}\cl B\right)=d(\cl A,\cl B)\leq \ell d_1.$
\end{proof}


\section{The main theorem}
In this section we prove the main theorem of the paper, namely:

\begin{thm}
\label{main}
  Let $\cl A$ and $\cl B$ be unital $C^*$-algebras satisfying (SP) and such that $L=L\left(\cl A\tens\limits_{max}\cl B\right)<\infty.$ Then $\,\cl A\tens\limits_{max}\cl B$ satisfies (SP) with similarity length $\ell\left(\cl A\tens\limits_{max}\cl B\right)\leq L \max\left\{\ell(\cl A),\,\ell(\cl B)\right\}.$
\end{thm}

\begin{proof}
We may assume that $\ell(\cl A)\geq \ell(\cl B).$ Let $\pi\colon \cl A\tens\limits_{max}\cl B\to \cl B(H)$ be a unital bounded homomorphism. The maps $$\pi_1\colon \cl A\to \cl B(H),\,\,\pi_1(a)=\pi(a\otimes 1_{\cl B})$$ $$\pi_2\colon \cl B\to \cl B(H),\,\,\pi_2(b)=\pi(1_{\cl A}\otimes b)$$ are unital bounded homomorphisms of $\cl A$ and $\cl B,$ respectively, into $\cl B(H)$ with commuting ranges. Since $\cl A$ and $\cl B$ satisfy (SP), $\,\pi_1$ and $\pi_2$ are similar to $*$-homomorphisms, and thus $\pi_1$ and $\pi_2$ are completely bounded. Proposition 12 in \cite{length} implies the existence of an invertible operator $S\in\cl B(H)$ such that both homomorphisms $$x\mapsto \rho_{j}(x)=S\,\pi_{j}(x)\,S^{-1},\,\,j=1,2$$ are completely contractive on $\cl A$ and $\cl B,$ respectively, with commuting ranges. By the fact that $\cl A$ and $\cl B$ are $C^*$-algebras, $\rho_1$ and $\rho_2$ are $*$-homomorphisms, and thus $\rho\colon \cl A\tens\limits_{max}\cl B\to \cl B(H)$ given by $\rho(a\otimes b)=\rho_1(a)\,\rho_2(b),\,\,a\in\cl A,\,b\in\cl B$ is a $*$-homomorphism similar to $\pi$ since for all $a\in\cl A,\,b\in\cl B$ we have  $$S\,\pi(a\otimes b)\,S^{-1}=(S \pi_1(a) S^{-1})(S \pi_2(a) S^{-1})=\rho_1(a) \rho_2(b)=\rho(a\otimes b).$$
Moreover, by \cite[Proposition 12]{length} it holds that \begin{equation}
    \label{1st} ||\pi||_{cb}\leq K \max\left\{||\pi_1||_{cb},\,||\pi_2||_{cb}\right\}^{L}.
\end{equation}
Finally, from \cite{pisier3} we have \begin{equation}\label{similarity length}
     ||\pi_1||_{cb}\leq c_1(\cl A)\,||\pi_1||^{\ell(\cl A)}\leq c_1(\cl A)\, ||\pi||^{\ell(\cl A)}.
 \end{equation}
 \begin{equation}\label{similarity length-1}
     ||\pi_2||_{cb}\leq c_2(\cl B)\,||\pi_2||^{\ell(\cl B)}\leq c_2(\cl B)\,||\pi||^{\ell(\cl A)}.
 \end{equation}
By combining (\ref{1st}), (\ref{similarity length}) and (\ref{similarity length-1}) we obtain $$
    ||\pi||_{cb}\leq \tilde{K}\,||\pi||^{L\,\ell(\cl A)},$$ as desired.
\end{proof}

\begin{rem}
    The converse of the above Theorem is not true. Indeed, the $C^*$-algebra $\cl B(H)\tens\limits_{max}\cl B(H)$ satisfies (SP) (as a $C^*$-algebra without tracial states). However, by \cite[Corollary 29]{pairs} we have $L\left(\cl B(H)\tens\limits_{max}\cl B(H)\right)=\infty.$ This example shows that compression strategy cannot work through the use of similarity degree.
\end{rem}

\begin{prop}
    Let $\cl A$ and $\cl B$ be unital $C^*$-algebras such that $\cl A$ is nuclear. If $\pi\colon \cl A\tens\limits_{min}\cl B\to \cl B(H)$ is a unital bounded homomorphism such that $\pi|_{\cl B}$ is completely bounded, then $\pi$ is completely bounded with $$||\pi||_{cb}\leq C \max\left\{||\pi|_{\cl A}||_{cb},\,||\pi|_{\cl B}||_{cb}\right\}^3.$$
\end{prop}

\begin{proof}
 The map $\pi|_{\cl A}\colon \cl A\to \cl B(H),\,\,\pi(a)=\pi(a\otimes 1_{\cl B})$ is a unital bounded homomorphism of $\cl A$ into $\cl B(H).$ The $C^*$-algebra $\cl A$ satsifies (SP) as a nuclear one, and thus $\pi|_{\cl A}$ is completely bounded. Since $L\left(\cl A\tens\limits_{min}\cl B\right)\leq 3$ \cite{pairs}, it follows from Lemma 22 in \cite{pairs} that $\pi$ is completely bounded with cb-norm $||\pi||_{cb}\leq C \max\left\{||\pi|_{\cl A}||_{cb},\,||\pi|_{\cl B}||_{cb}\right\}^3.$ 
\end{proof}

The above Proposition generalises Corollary 2.3 in \cite{pop} and has a lot of other corollaries as well. We present them below:

\begin{cor}
\label{sp-nuc}
The tensor product of a nuclear unital $C^*$-algebra $\cl A$ with a unital $C^*$-algebra $\cl B$ satisfying (SP), also satisfies (SP).
\end{cor}

We strengthen Corollary \ref{sp-nuc} by proving the same result for the non-unital case. Before this we prove a useful lemma. 

\begin{lem}
    \label{equal}
    Let $\cl A$ and $\cl B$ be isomorphic $C^*$-algebras. Then $\ell(\cl A)=\ell(\cl B).$    
\end{lem}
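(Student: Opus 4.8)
The statement to prove is that isomorphic $C^*$-algebras have the same length. The plan is straightforward: use the fact that an isomorphism of $C^*$-algebras is automatically isometric (indeed completely isometric), so it preserves all the quantities appearing in the definition of length.

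Concretely, let $\varphi\colon \cl A\to \cl B$ be a $*$-isomorphism. First I would recall that any $*$-isomorphism between $C^*$-algebras is isometric, and moreover completely isometric, since $\varphi_n\colon M_n(\cl A)\to M_n(\cl B)$ is again a $*$-isomorphism of $C^*$-algebras for every $n$. Now suppose $\ell(\cl B)=k<\infty$ with associated constant $C=C(\cl B)>0$; the case $\ell(\cl B)=\infty$ forces $\ell(\cl A)=\infty$ by symmetry of the argument, so it suffices to handle the finite case and then invoke symmetry. Given any bounded homomorphism $\pi\colon \cl A\to \cl B(H)$, the composition $\pi\circ \varphi^{-1}\colon \cl B\to \cl B(H)$ is a bounded homomorphism with $\|\pi\circ\varphi^{-1}\|=\|\pi\|$ and $\|\pi\circ\varphi^{-1}\|_{cb}=\|\pi\|_{cb}$, because $\varphi^{-1}$ is completely isometric. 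Applying the length inequality for $\cl B$ gives $\|\pi\|_{cb}=\|\pi\circ\varphi^{-1}\|_{cb}\leq C\,\|\pi\circ\varphi^{-1}\|^{k}=C\,\|\pi\|^{k}$, which shows $\ell(\cl A)\leq k=\ell(\cl B)$ with the same constant $C$. By the symmetric argument applied to $\varphi^{-1}$, we also get $\ell(\cl B)\leq \ell(\cl A)$, hence equality.

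There is no real obstacle here; the only point requiring a moment's care is the claim that a $*$-isomorphism is completely isometric, which reduces to the standard fact that an injective $*$-homomorphism of $C^*$-algebras is isometric, applied to the amplifications $\varphi_n$. I would also note that one should check the minimality (smallest integer) clause of the definition is respected: if $\ell(\cl A)$ were strictly smaller than $\ell(\cl B)$, the same transfer argument run in the reverse direction would contradict the minimality of $\ell(\cl B)$, so both minimal integers coincide.

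\begin{proof}
    Let $\varphi\colon \cl A\to \cl B$ be a $*$-isomorphism. For each $n\in\bb N$ the amplification $\varphi_n\colon M_n(\cl A)\to M_n(\cl B)$ is again a $*$-isomorphism of $C^*$-algebras, hence isometric; therefore $\varphi$ and $\varphi^{-1}$ are completely isometric. If $\ell(\cl B)=\infty$ then, by the symmetric argument below applied to $\varphi^{-1}$, also $\ell(\cl A)=\infty$ and there is nothing to prove. So assume $\ell(\cl B)=k<\infty$ and let $C=C(\cl B)>0$ be a constant such that every bounded homomorphism $\sigma\colon \cl B\to \cl B(H)$ satisfies $\|\sigma\|_{cb}\leq C\,\|\sigma\|^{k}$. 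Let $\pi\colon \cl A\to \cl B(H)$ be a bounded homomorphism. Then $\sigma:=\pi\circ\varphi^{-1}\colon \cl B\to \cl B(H)$ is a bounded homomorphism and, since $\varphi^{-1}$ is completely isometric, $\|\sigma\|=\|\pi\|$ and $\|\sigma\|_{cb}=\|\pi\|_{cb}$. Hence
    $$\|\pi\|_{cb}=\|\sigma\|_{cb}\leq C\,\|\sigma\|^{k}=C\,\|\pi\|^{k},$$
    which shows $\ell(\cl A)\leq k=\ell(\cl B)$. Applying the same reasoning to $\varphi^{-1}\colon \cl B\to \cl A$ gives $\ell(\cl B)\leq \ell(\cl A)$, and therefore $\ell(\cl A)=\ell(\cl B)$.
\end{proof}
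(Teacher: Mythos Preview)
Your proof is correct and follows essentially the same approach as the paper: transfer a bounded homomorphism $\pi$ on $\cl A$ to $\sigma=\pi\circ\varphi^{-1}$ on $\cl B$, apply the length inequality there, and pull it back using that the $*$-isomorphism is completely isometric (the paper phrases this as ``completely contractive'' and uses submultiplicativity, but the argument is the same). Your treatment of the $\ell(\cl B)=\infty$ case and the symmetry step are also in line with the paper.
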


\begin{proof}
    Let $f\colon \cl A\to \cl B$ be a $*$-isomorphism between $\cl A$ and $\cl B.$ If $\ell(\cl B)=\infty$ then by \cite{pisier3} the $C^*$-algebra $\,\cl B$ does not satisfy (SP), and thus $\cl A$ does not satisfy (SP) which implies that $\ell(\cl A)=\ell(\cl B)=\infty.$ Assume that $\ell(\cl B)<\infty.$ Since the relation $\cong$ of isomorphism between $C^*$-algebras is symmetric, it suffices to prove that $\ell(\cl A)\leq \ell(\cl B).$ To this end, let $\pi\colon \cl A\to \cl B(H)$ be a bounded representation of $\cl A$ on the Hilbert space $H.$ The map $\tilde{\pi}=\pi\circ f^{-1}\colon \cl B\to \cl B(H)$ is a bounded representation of $\cl B,$ and thus $||\tilde{\pi}||_{cb}\leq c(\cl B)\,||\tilde{\pi}||^{\ell(\cl B)}.$ Since $\pi=\tilde{\pi}\circ f$ and $f$ is completely contractive, we have $$||\pi||_{cb}\leq ||\tilde{\pi}||_{cb}\,||f||_{cb}\leq c(\cl B)\,||\tilde{\pi}||^{\ell(\cl B)}\leq c(\cl B)\,||\pi||^{\ell(\cl B)}.$$ Therefore, $\ell(\cl A)\leq \ell(\cl B).$
\end{proof}

We are treating now the non-unital version of Corollary \ref{sp-nuc}.
\begin{prop}
\label{gen}
    Let $\cl A$ and $\cl B$ be $C^*$-algebras such that $\cl A$ satisfies (SP) and $\cl B$ is nuclear. Then $\cl A\tens\limits_{min} \cl B$ satisfies (SP) and $\ell\left(\cl A\tens\limits_{min}\cl B\right)\leq 3 \max\left\{\ell(\cl A),\,\ell(\cl B)\right\}.$ 
\end{prop}

\begin{proof}
Assume that $\cl A$ and $\cl B$ act on the Hilbert spaces $H$ and $K,$ respectively. The minimal tensor product $\cl A\tens\limits_{min} \cl B$ of $\cl A$ and $\cl B$ is a closed, self-adjoint two-sided ideal of the minimal tensor product $\cl A^1\tens\limits_{min} \cl B^1$ of the unitizations $\cl A^1$ and $\cl B^1$ of the $C^*$-algebras $\cl A$ and $\cl B,$ respectively, into $\cl B(H\otimes K).$ The $C^*$-algebra $\cl A^1$ satisfies (SP). Indeed, $\cl A$ is a two-sided ideal of $\cl A^1$ such that $\cl A^1/\cl A\cong \bb C$ and by combining \cite[Remark 6]{pisier4} and Lemma \ref{equal} we have $$\ell(\cl A^1)=\max\left\{\ell(\cl A),\,\ell(\cl A^1/\cl A)\right\}=\max\left\{\ell(\cl A),\,\ell(\bb C)\right\}=\ell(\cl A)<\infty.$$ Moreover, Corollary 8.17 in \cite{pistensor} yields that $\cl B^1$ is nuclear since $\cl B$ is nuclear and $\cl B^1/\cl B\cong \bb C$ is also nuclear. According to Corollary \ref{sp-nuc}, the $C^*$-algebra $\,\cl A^1\tens\limits_{min} \cl B^1$ satisfies (SP). By \cite[Remark 6]{pisier4}, the fact that $L\left(\cl A^1\tens\limits_{min}\cl B^1\right)\leq 3$ \cite{pairs} and Theorem \ref{main}, we have $$\ell\left(\cl A\tens\limits_{min} \cl B\right)\leq \ell\left(\cl A^1\tens\limits_{min} \cl B^1\right)\leq 3 \max\left\{\ell(\cl A^1),\,\ell(\cl B^1)\right\}=3 \max\left\{\ell(\cl A),\,\ell(\cl B)\right\},$$ and so $\cl A\tens\limits_{min} \cl B$ satisfies (SP) \cite{pisier3}.
\end{proof}

\begin{ex}
 For any Hilbert space $H$ and any discrete amenable group $G$ the $C^*$-algebra $\cl B(H)\tens\limits_{min} C^*(G)$ satisfies (SP).
Let $\cl A$ be a $C^*$-algebra satisfying (SP) and $n\in\bb N.$ Since $M_n(\cl A)\cong \cl A\tens\limits_{min} M_n(\bb C)$ and $M_n(\bb C)$ is nuclear, Proposition \ref{gen} implies that $M_n(\cl A)$ satisfies (SP). In the following Proposition we prove the converse direction.
\end{ex}

\begin{cor}
\label{fd}
 Let $\cl A$ be a $C^*$-algebra and $n\in\bb N.$ If $M_n(\cl A)$ satisfies (SP) then $\cl A$ satisfies (SP).
\end{cor}

\begin{proof}
  We assume that $M_n(\cl A)$ satisfies (SP) and let $\rho\colon \cl A\to \cl B(H),$ where $H$ is a Hilbert space, be a bounded homomorphism. We consider the $n$-th amplification $$\rho_n\colon M_n(\cl A)\to \cl B(H^n),\,\,\rho_n([A_{ij}])=[\rho(A_{ij})]$$ which is a bounded homomorphism of $M_n(\cl A)$ into $\cl B(H^n).$ Since $M_n(\cl A)$ satisfies (SP), by \cite[Theorem 1.10]{haag} the map $\rho_n$ is completely bounded, and thus $\rho$ is completely bounded, as desired.
\end{proof}

We conclude this section with two results on type $II_1$ factors.

\begin{prop}
    \label{inj}
 Let $\cl M$ be a type $II_1$ factor and $n\in\bb N.$ Then $\cl M$ satisfies (WSP) if and only if $M_n(\cl M)$ satisfies (WSP).   
\end{prop}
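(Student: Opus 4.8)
The plan is to reduce the statement about the type $II_1$ factor $\cl M$ and its matrix amplification $M_n(\cl M)$ to the already-established machinery connecting (WSP) with completely bounded homomorphisms, exactly as in the proof of Proposition \ref{fd}, but now working in the $\mathrm{w}^*$-continuous, von Neumann algebraic setting. First I would record the elementary identifications: $M_n(\cl M) \cong \cl M \,\bar\otimes\, M_n(\bb C)$ as von Neumann algebras, and $M_n(\cl M)$ is again a type $II_1$ factor (with normalized trace the tensor product of the traces), so the statement makes sense symmetrically. The key tool is Haagerup's theorem together with its $\mathrm{w}^*$-continuous refinement: for a von Neumann algebra, a unital $\mathrm{w}^*$-continuous bounded homomorphism into $\cl B(H)$ is similar to a ($\mathrm{w}^*$-continuous) $*$-homomorphism if and only if it is completely bounded. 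Thus (WSP) for a von Neumann algebra is equivalent to the assertion that every unital $\mathrm{w}^*$-continuous bounded homomorphism on it is automatically completely bounded.

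For the forward direction, assume $\cl M$ satisfies (WSP) and let $u\colon M_n(\cl M)\to \cl B(K)$ be a unital $\mathrm{w}^*$-continuous bounded homomorphism. Restricting $u$ along the normal unital embedding $\cl M \hookrightarrow M_n(\cl M)$, $x \mapsto x\otimes 1_n$ (equivalently $x \mapsto \mathrm{diag}(x,\dots,x)$), gives a unital $\mathrm{w}^*$-continuous bounded homomorphism $u_0\colon \cl M\to \cl B(K)$; by (WSP) for $\cl M$ and Haagerup, $u_0$ is completely bounded. Now one recovers $u$ from $u_0$ together with the images of the matrix units $e_{ij}\otimes 1_{\cl M}$, which span a copy of $M_n(\bb C)$ inside $M_n(\cl M)$ commuting appropriately; since $M_n(\bb C)$ is finite-dimensional (hence nuclear, length $\leq 2$) the homomorphism restricted there is completely bounded with controlled constant, and the standard "$M_n$-over-a-cb-map" argument (the amplification $u$ factors through $u_0$ via conjugation by the partial isometries $u(e_{1j}\otimes 1)$, as in the proof that $\ell(M_n(\cl A))$ is controlled by $\ell(\cl A)$) shows $\|u\|_{cb} < \infty$. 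Hence $u$ is similar to a $*$-homomorphism, and $M_n(\cl M)$ satisfies (WSP). Conversely, assume $M_n(\cl M)$ satisfies (WSP) and let $\rho\colon \cl M\to \cl B(H)$ be a unital $\mathrm{w}^*$-continuous bounded homomorphism; form the amplification $\rho_n = \rho \otimes \mathrm{id}_{M_n}\colon M_n(\cl M)\to \cl B(H^n)$, which is again unital, $\mathrm{w}^*$-continuous and bounded. By (WSP) for $M_n(\cl M)$ and Haagerup, $\rho_n$ is completely bounded; but $\|\rho\|_{cb} \leq \|\rho_n\|_{cb}$ (the amplification only increases the cb-norm, exactly as in Proposition \ref{fd}), so $\rho$ is completely bounded and thus similar to a $*$-homomorphism. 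Therefore $\cl M$ satisfies (WSP).

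The routine direction is the converse ($M_n(\cl M)$ (WSP) $\Rightarrow$ $\cl M$ (WSP)), which is a verbatim transcription of Proposition \ref{fd} into the $\mathrm{w}^*$-continuous setting, using only that amplification preserves $\mathrm{w}^*$-continuity and that $\|\rho_n\|_{cb}\geq \|\rho\|_{cb}$. The main obstacle is the forward direction: one must be careful that the "tensoring with $M_n(\bb C)$" step genuinely produces a \emph{completely bounded} $u$ and not merely a bounded one, and that the similarity transform can be taken $\mathrm{w}^*$-continuously — but this is handled by the nuclearity of $M_n(\bb C)$ (Remark \ref{inequality}: $d(M_n(\bb C), \cdot)\leq 3$, though here one can do better since $M_n(\bb C)$ is finite-dimensional) combined with the explicit reconstruction of $u$ from $u_0$ and the images of finitely many matrix units, which keeps everything normal. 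I expect no genuine new difficulty beyond bookkeeping, since the type $II_1$ factor hypothesis is used only to make the statement natural (it is not really needed for the proof, which works for any von Neumann algebra) — the real content is the interplay of Haagerup's similarity-equals-cb theorem with the finite-dimensionality of $M_n(\bb C)$.
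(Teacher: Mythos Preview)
Your proposal is correct. For the implication $M_n(\cl M)$ satisfies (WSP) $\Rightarrow$ $\cl M$ satisfies (WSP), your amplification argument coincides exactly with the paper's. For the other implication, $\cl M$ satisfies (WSP) $\Rightarrow$ $M_n(\cl M)$ satisfies (WSP), the approaches diverge: the paper simply invokes Corollary~4.7 of \cite{ele-pap} (the general fact that the normal spatial tensor product of a von Neumann algebra satisfying (WSP) with an injective von Neumann algebra---here $M_n(\bb C)$---again satisfies (WSP)), whereas you give a direct, self-contained argument, using the matrix units $1\otimes e_{ij}$ to factor $u$ as a composition of the amplification of $u_0 = u|_{\cl M\otimes 1}$ with left and right multiplication by fixed bounded row and column operators, thereby bounding $\|u\|_{cb}$ in terms of $\|u_0\|_{cb}$. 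Your route is more elementary and makes transparent that the $II_1$-factor hypothesis is not actually used (as you correctly observe at the end); the paper's route is a one-line citation but imports the heavier tensor-product machinery of \cite{ele-pap}.
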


\begin{proof}
     We assume that $M_n(\cl M)$ satisfies (WSP) and let $u\colon \cl M\to \cl B(H)$ be a $\rm{w^*}$-continuous, unital and bounded homomorphism. The $n$-th amplification $$u_n\colon M_n(\cl M)\to \cl B(H^n),\,\,u_n([X_{ij}])=[u(X_{ij})]$$ is a $\rm{w}^*$-continous, unital and bounded homomorphism of $M_n(\cl M)$ into $\cl B(H^n).$ By the fact that $M_n(\cl M)$ satisfies (WSP) it follows that $u_n$ is similar to a $*$-homomorphism, and thus $u_n$ is completely bounded, \cite[Theorem 1.10]{haag}. Thus $u$ is completely bounded. For the converse direction we use Corollary 4.7 in \cite{ele-pap}.
\end{proof}


\begin{cor}
     Let $\cl M$ be a type $II_1$ factor and $\tau$ its normal tracial state. Let also $n\in\bb N$ and $P$ be a projection in $\cl M$ such that $\tau(P)=\frac{1}{n}.$  Then $\cl M$ satisfies (WSP) if and only if $P \cl M P$ satisfies (WSP).
\end{cor}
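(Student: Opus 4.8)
The plan is to reduce the statement about $P\cl M P$ to the statement about $M_n(\cl M)$ that was just established in Proposition \ref{inj}, via the standard fact that corners of factors by projections of trace $1/n$ are matrix amplifications. First I would recall that since $\cl M$ is a type $II_1$ factor, any two projections with the same trace are Murray--von Neumann equivalent; in particular, since $\tau(P)=\tfrac1n$, there exist partial isometries $v_1,\dots,v_n\in\cl M$ with $v_iv_i^*$ mutually orthogonal projections summing to $1_{\cl M}$ and $v_i^*v_i=P$ for each $i$ (take $v_1=P$ and decompose $1_{\cl M}$ into $n$ orthogonal projections each equivalent to $P$). These partial isometries implement a $*$-isomorphism $\cl M\cong M_n(P\cl M P)$, the matrix units being $e_{ij}=v_iv_j^*$; equivalently $M_n(P\cl M P)\cong M_n(\bb C)\,\bar\otimes\,(P\cl M P)\cong \cl M$ as von Neumann algebras. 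I would then simply invoke Proposition \ref{inj} applied to the type $II_1$ factor $P\cl M P$ (note $P\cl M P$ is again a type $II_1$ factor, since $\cl M$ is), which gives that $P\cl M P$ satisfies (WSP) if and only if $M_n(P\cl M P)$ satisfies (WSP). Combining with the isomorphism $M_n(P\cl M P)\cong \cl M$ and the fact that (WSP) is clearly preserved under $*$-isomorphism of von Neumann algebras (a $\mathrm{w}^*$-continuous bounded unital homomorphism composed with a normal $*$-isomorphism is again such a homomorphism, and similarity is transported), the corollary follows.

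The only genuine content beyond Proposition \ref{inj} is the identification $\cl M\cong M_n(P\cl M P)$, and the main point to be careful about is that $P\cl M P$ is indeed a type $II_1$ factor so that Proposition \ref{inj} applies to it: its center is $P\,Z(\cl M)\,P = \bb C P$, and it carries the renormalised trace $n\,\tau|_{P\cl M P}$, which is a faithful normal tracial state, and it is infinite-dimensional (a corner of a $II_1$ factor by a nonzero projection is never finite-dimensional), hence type $II_1$. Once this is in place, no estimates are needed — the argument is a transport-of-structure along the two isomorphisms $\cl M\cong M_n(P\cl M P)$ and the equivalence supplied by Proposition \ref{inj}.

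I expect the main (and only mild) obstacle to be purely bookkeeping: writing down the system of matrix units $e_{ij}=v_iv_j^*$ correctly and checking that $x\mapsto \big(v_i^* x v_j\big)_{i,j}$ is the desired normal $*$-isomorphism $\cl M\to M_n(P\cl M P)$ with inverse $(x_{ij})\mapsto \sum_{i,j} v_i x_{ij} v_j^*$. This is completely standard, so the proof will be short; I would phrase it in one paragraph invoking Murray--von Neumann equivalence of equal-trace projections in a $II_1$ factor, then cite Proposition \ref{inj} and close.
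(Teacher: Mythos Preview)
Your proposal is correct and follows essentially the same route as the paper: establish the isomorphism $\cl M\cong M_n(P\cl M P)$ and then invoke Proposition \ref{inj} applied to the type $II_1$ factor $P\cl M P$. The only difference is that you spell out the matrix-unit construction yourself, whereas the paper simply cites \cite[Proposition 4.2.5]{anpop} for the isomorphism; your added care in checking that $P\cl M P$ is again a type $II_1$ factor is a point the paper leaves implicit.
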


\begin{proof}
     By \cite[Proposition 4.2.5]{anpop} the von Neumann algebras $\cl M$ and $M_n(P \cl M P)$ are $*$-isomorphic. Therefore, $\cl M$ satisfies (WSP) if and only if $M_n(P \cl M P)$ satisfies (WSP). According to Proposition \ref{inj}, $\cl M$ satisfies (WSP) if and only if $P \cl M P$ satisfies (WSP). 
\end{proof}

Other results on similarities for tensor products of certain $C^*$-algebras as well as on similarities for tensor products of type $II_1$ factors exist in \cite{pop, pop1}.
\vspace{0.5 em}
\begin{flushleft}
  {\bf Acknowledgements:} I am grateful to Iason Moutzouris for his helpful suggestions during the preparation of this paper.  
\end{flushleft}

\noindent

\end{document}